\renewcommand{\bar}{\overline}
\DeclareMathOperator{\im}{Im}
\newcommand{\C}{\mathbb C}
\newcommand{\HH}{\mathbb H}
\newcommand{\Q}{\mathbb Q}
\newcommand{\Z}{\mathbb Z}
\newcommand{\OO}{\mathcal O}
\theoremstyle{plain}
\newtheorem{theorem}{Theorem}[section]
\newtheorem{lemma}[theorem]{Lemma}
\newtheorem{proposition}[theorem]{Proposition}
\title{Linear independence of powers of singular moduli of degree $3$}
\author{Florian Luca, Antonin Riffaut}
\date{\today}
\begin{document}

\maketitle

\begin{abstract}
We show that two distinct singular moduli $j(\tau),j(\tau')$, such that for some positive integers $m, n$ the numbers $1,j(\tau)^m$ and $j(\tau')^n$ are linearly dependent over $\mathbb{Q}$ generate the same number field of degree at most $2$. This completes a result of Riffaut, who proved the above theorem except for two explicit pair of exceptions consisting of numbers of degree $3$. The purpose of this article is to treat these two remaining cases.
\end{abstract}

\section{Introduction}

Let~$j$ be the classical $j$-function on the Poincaré plane ${\HH=\{z\in \C:\im z>0\}}$. 
A \textsl{singular modulus} is a number of the form $j(\tau)$, where ${\tau\in \HH}$ is a complex algebraic number of degree~$2$. It is known that $j(\tau)$ is an algebraic integer and Class Field Theory tells that
\[[\Q(j(\tau)):\Q]=[\Q(\tau,j(\tau)):\Q(\tau)]=h_\Delta\]
is the class number of the order ${\OO_\Delta=\Z[(\Delta+\sqrt\Delta)/2]}$, where~$\Delta$ is the discriminant of the minimal polynomial of~$\tau$ over~$\Z$. Moreover, $\Q(\tau,j(\tau))/\Q(\tau)$ is an abelian Galois extension with Galois group (canonically) isomorphic to the class group of the order $\OO_\Delta$. One can also interpret $\OO_\Delta$ as the automorphism ring of the lattice $\langle1,\tau\rangle$, or of the corresponding elliptic curve. For all details, see, for instance, \cite[§7 and §11]{Co89}.

Starting from the ground-breaking article of André \cite{An98}, equations involving singular moduli were studied by many authors, see \cite{Al15,Yu16,Ri17} for a historical account and further references. In particular, Kühne \cite{Ku13} proved that the equation $x+y=1$ has no solutions in singular moduli $x,y$, and Bilu et al. \cite{Yu13} proved the same conclusion holds for the equation $xy=1$. These results were generalized in \cite{Al15} and \cite{Yu16}. In \cite{Al15}, solutions of all linear equations $Ax+By=C$, with $A,B,C\in\mathbb{Q}$, were determined. Here is the main result of \cite{Al15}.

\begin{theorem}[Allombert et al. \cite{Al15}]
\label{th:allombert}
Let $x,y$ be two singular moduli, and $A,B,C$ rational numbers with $AB\neq 0$. Assume that $Ax+By=C$. Then we have one of the following options:
\begin{description}
\item[(trivial case)] $A+B=C=0$ and $x=y$;
\item[(rational case)] $x,y\in\mathbb{Q}$;
\item[(quadratic case)] $x\neq y$ and $x,y$ generate the same number field over $\mathbb{Q}$ of degree $2$.
\end{description}
\end{theorem}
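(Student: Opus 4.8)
The plan is to convert the linear relation into a statement about an affine map permuting singular moduli, and then to exploit the enormous size disparity among the conjugates of a singular modulus of fixed discriminant.

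First I would note that $AB\neq 0$ forces $\Q(x)=\Q(y)$, since $y=(C-Ax)/B\in\Q(x)$ and symmetrically $x\in\Q(y)$. Writing $x=j(\tau)$, $y=j(\tau')$ with discriminants $\Delta,\Delta'$, this already gives $h_\Delta=[\Q(x):\Q]=[\Q(y):\Q]=h_{\Delta'}$. It is cleanest to record the hypothesis as $y=\alpha x+\beta$ with $\alpha=-A/B\neq 0$ and $\beta=C/B$. Applying every embedding $\sigma\colon\Q(x)\to\C$, each conjugate pair satisfies $\sigma(y)=\alpha\,\sigma(x)+\beta$; since $\sigma(x)$ ranges over all roots of the minimal (Hilbert class) polynomial $P_\Delta$ of $x$ and $\sigma(y)$ over all roots of $P_{\Delta'}$, the affine map $\phi(t)=\alpha t+\beta$ carries the root set of $P_\Delta$ bijectively onto that of $P_{\Delta'}$. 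Comparing leading coefficients, this is the same as the identity $P_{\Delta'}(\alpha t+\beta)=\alpha^{h}P_\Delta(t)$, where $h=h_\Delta$; the two class polynomials are \emph{affinely equivalent}.

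The analytic engine is a gap principle. For a reduced form $[a,b,c]$ of discriminant $\Delta$ one has $\tau=(-b+\sqrt\Delta)/(2a)$ with $\im\tau=\sqrt{|\Delta|}/(2a)$, so the $q$-expansion $j(\tau)=q^{-1}+744+O(q)$, $q=e^{2\pi i\tau}$, produces a single \emph{dominant} conjugate (the principal form $a=1$) of absolute value $\approx e^{\pi\sqrt{|\Delta|}}$, whereas every other conjugate has $a\geq 2$ and hence absolute value at most $\approx e^{\pi\sqrt{|\Delta|}/2}$. I would feed this into $\phi$: the dominant root of $P_\Delta$ must map to a root of $P_{\Delta'}$ of comparable, hence maximal, size, so dominant goes to dominant, and matching magnitudes forces $\sqrt{|\Delta|}$ and $\sqrt{|\Delta'|}$ to agree up to the bounded quantity $\tfrac1\pi\log|\alpha|$. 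A careful effective version of this comparison — using the next terms of the expansion together with the inverse map $\phi^{-1}$ on the $P_{\Delta'}$ side — should pin down $\Delta=\Delta'$ whenever $h\geq 2$. The case $h=1$ is precisely the rational option, where distinct discriminants (for instance $x=0$, $y=1728$) are genuinely permitted, and one expects only finitely many small discriminants to survive, to be ruled out by direct computation.

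Once $\Delta=\Delta'$, the map $\phi$ permutes the roots of the single polynomial $P_\Delta$; comparing the dominant root with its image, and doing the same for $\phi^{-1}$, forces $|\alpha|=1$, hence $\alpha=\pm1$. If $\alpha=1$ then $\phi$ is a translation $t\mapsto t+\beta$, which can permute a finite nonempty set only if $\beta=0$, giving $x=y$ and the trivial case $(A+B)x=C$. If $\alpha=-1$ then $\phi(t)=\beta-t$ is an involution pairing roots that sum to $\beta$; the dominant root $j_0\approx-e^{\pi\sqrt{|\Delta|}}$ must be paired with some root, forcing $\beta\approx j_0$, after which any \emph{further} pair would also have to sum to this huge $\beta$ — impossible once three roots are present, since all non-dominant roots are $O(e^{\pi\sqrt{|\Delta|}/2})$. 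Hence $h\leq 2$: $h=1$ yields the rational case, and $h=2$ yields two distinct moduli $j_0,j_1$ with $\beta=j_0+j_1$ generating the same quadratic field, the quadratic case. I expect the main obstacle to lie in the middle step — turning the soft statement ``dominant maps to dominant'' into a genuine proof that $\Delta=\Delta'$, with an explicit bound on $|\Delta|$ small enough that the remaining discriminants can be verified by machine.
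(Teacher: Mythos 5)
Note first that the paper does not prove this statement at all: Theorem~\ref{th:allombert} is quoted verbatim from \cite{Al15}, so your attempt can only be measured against the argument in that reference and against mathematical correctness. Your opening reductions are correct and do match the start of the actual proof: $AB\neq 0$ gives $\Q(x)=\Q(y)$, hence $h_\Delta=h_{\Delta'}$, and the affine map $\phi(t)=\alpha t+\beta$ with $\alpha=-A/B$, $\beta=C/B$ carries the roots of the class polynomial $P_\Delta$ bijectively onto those of $P_{\Delta'}$, i.e.\ $P_{\Delta'}(\alpha t+\beta)=\alpha^{h}P_\Delta(t)$. The gap principle (one dominant conjugate of size about $\mathrm{e}^{\pi\sqrt{|\Delta|}}$, all others $O(\mathrm{e}^{\pi\sqrt{|\Delta|}/2})$) is also the right analytic tool and is used throughout \cite{Al15}, \cite{Ri17} and the present paper.

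The middle step, however, is not merely ``the main obstacle'' as you suspect --- it is false, and the whole architecture collapses with it. You claim the size comparison should ``pin down $\Delta=\Delta'$ whenever $h\geq 2$.'' First, the quantity $\tfrac{1}{\pi}\log|\alpha|$ is not bounded: $\alpha$ is an arbitrary nonzero rational, so matching dominant conjugates gives no control on $\sqrt{|\Delta|}-\sqrt{|\Delta'|}$ without first bounding $\alpha$, which you never do. Second, and decisively, there exist genuine solutions with $\Delta\neq\Delta'$: the singular moduli of discriminants $-15$ and $-20$ all lie in $\Q(\sqrt{5})$ (class polynomials $t^2+191025t-121287375$ and $t^2-1264000t-681472000$, with roots $(-191025\pm 85995\sqrt{5})/2$ and $632000\pm 282880\sqrt{5}$), so choosing one root of each yields $x\neq y$ of degree $2$ with $1,x,y$ linearly dependent and $AB\neq 0$ --- a bona fide instance of the quadratic case with unequal discriminants, in which the affine map has $|\alpha|=565760/85995\neq 1$, so your subsequent deduction ``$|\alpha|=1$, hence $\alpha=\pm 1$'' also fails on actual solutions. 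Third, for $h=3$ field equality still does not force $\Delta=\Delta'$: the pairs $\{\Delta,\Delta'\}=\{-23,-4\cdot 23\}$ and $\{-31,-4\cdot 31\}$ --- precisely the exceptional pairs this paper was written to eliminate --- consist of distinct discriminants whose singular moduli generate the same cubic field. Thus the hard case, $\Q(x)=\Q(y)$ with $\Delta\neq\Delta'$ and $h\geq 3$, is exactly the one your scheme throws away; handling it is the bulk of \cite{Al15}, which proceeds by classifying the pairs of discriminants generating equal fields and exploiting collinearity of three Galois-conjugate points $(x_i,y_i)$ via a determinant identity of the shape \eqref{eq:colin-det}, together with the dominant/subdominant estimates. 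Your endgame under the extra hypothesis $\Delta=\Delta'$ (a translation permuting the roots forces $\beta=0$; an involution $t\mapsto\beta-t$ with one dominant root forces $h\leq 2$) is sound as far as it goes, but it proves only a fragment of the theorem.
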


This result is best possible, since in both the rational case and the quadratic case of Theorem \ref{th:allombert}, one easily finds $A,B,C\in\mathbb{Q}$ such that $AB\neq 0$ and $Ax+By=C$. Moreover, the lists of singular moduli of degrees $1$ and $2$ over $\mathbb{Q}$ are widely available or can be easily generated using a suitable computer package, like \textsf{PARI} \cite{Pari}. In particular, there are $13$ rational singular moduli, and $29$ pairs of $\mathbb{Q}$-conjugate singular moduli of degree $2$; see \cite[Section 1]{Yu16} for more details. This means that Theorem \ref{th:allombert} gives a completely explicit characterization of all solutions.

In \cite{Ri17}, Riffaut generalized Theorem \ref{th:allombert} by introducing exponents; that is, instead of equation $Ax+By=C$, he considered the more general equation ${Ax^m+By^n=C}$, where the positive integer exponents $m,n$ are unknown as well. He proved that, if $x\neq y$, then $x,y$ generate the same number field of degree $h\leq 3$, and $h=3$ is possible only if either $\{\Delta,\Delta'\}=\{-4\cdot 23,-23\}$, or $\{\Delta,\Delta'\}=\{-4\cdot 31,-31\}$, where $\Delta,\Delta'$ denote the respective discriminants of $x$ and $y$. In this article, we eliminate these two remaining cases. Here is the statement of our result.

\begin{theorem}
\label{th:main}
Let $x=j(\tau),y=j(\tau')$ be two singular moduli of respective discriminants $\Delta$ and $\Delta'$, and $m,n$ two positive integers. If $\{\Delta,\Delta'\}={\{-4\cdot 23,-23\}}$ or $\{\Delta,\Delta'\}=\{-4\cdot 31,-31\}$, then the numbers $1,x^m,y^n$ are linearly independent over $\mathbb{Q}$.
\end{theorem}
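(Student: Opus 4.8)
The plan is to treat the pair $\{\Delta,\Delta'\}=\{-4\cdot23,-23\}$ in detail; the pair $\{-4\cdot31,-31\}$ is then handled verbatim with $23$ replaced by $31$. Put $K=\Q(\sqrt{-23})$. Since $h_{-23}=h_{-92}=3$ and the conductor-$2$ ring class field of $K$ contains its Hilbert class field with the same degree over $K$, the two coincide in a single field $L$ with $[L:\Q]=6$, $\mathrm{Gal}(L/\Q)\cong S_3$ and $\mathrm{Gal}(L/K)\cong\Z/3\Z$; both $x$ and $y$ lie in $L$. As $K$ is imaginary, $L$ is totally complex, complex conjugation $c$ has order $2$, and $F:=L\cap\mathbb{R}=L^{\langle c\rangle}$ is one of the three conjugate cubic subfields of $L$. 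The principal (real) singular moduli of discriminants $-92$ and $-23$ both lie in $F$ and hence generate it.

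Suppose for contradiction that $A+Bx^m+Cy^n=0$ with $A,B,C$ not all zero, scaled to coprime integers. If $B=0$ or $C=0$ then $x^m\in\Q$ or $y^n\in\Q$, impossible because $x$ and $y$ each have conjugates of two distinct absolute values (the principal one, of size about $e^{\pi\sqrt{92}}$ resp. $e^{\pi\sqrt{23}}$, dominating the others). So $BC\neq0$. If $\Q(x)\neq\Q(y)$ these are distinct cubic subfields, whence $\Q(x)\cap\Q(y)=\Q$; but $y^n=-(A+Bx^m)/C\in\Q(x)$ would then force $y^n\in\Q$, again impossible. Hence $\Q(x)=\Q(y)$, and since the three diagonal conjugate pairs with this property are permuted by $\sigma$, we may assume $x=x_0,y=y_0$ are the real principal moduli in $F$. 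Writing $x_k=\sigma^k(x),\ y_k=\sigma^k(y)$ for a generator $\sigma$ of $\mathrm{Gal}(L/K)$ (so $x_2=\bar{x_1}$, $y_2=\bar{y_1}$), applying the three embeddings of $F$ to the relation yields
\[D_{m,n}:=\det\begin{pmatrix}1&x_0^m&y_0^n\\1&x_1^m&y_1^n\\1&x_2^m&y_2^n\end{pmatrix}=0,\]
and conversely $1,x^m,y^n$ are $\Q$-independent exactly when $D_{m,n}\neq0$. Thus the theorem amounts to $D_{m,n}\neq0$ for all $m,n\ge1$. Since $\sigma$ permutes the rows cyclically (fixing $D_{m,n}$) while $c$ reverses them (negating it), $D_{m,n}$ is a purely imaginary algebraic integer of $K$, so $D_{m,n}=\delta_{m,n}\sqrt{-23}$ with $\delta_{m,n}\in\Z$; in particular $D_{m,n}\neq0$ is equivalent to $|D_{m,n}|\ge\sqrt{23}$.

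To bound $|D_{m,n}|$ from below I would use the $q$-expansions. Expanding along $x_2=\bar{x_1}$, $y_2=\bar{y_1}$ gives $D_{m,n}=2i\,\Phi_{m,n}$ with
\[\Phi_{m,n}=x_0^m\,\im(y_1^n)-y_0^n\,\im(x_1^m)+\im(x_1^m\,\bar{y_1}^{\,n}),\]
whose three terms have logarithmic sizes (in units of $\pi\sqrt{23}$) equal to $2m+\tfrac n2$, $n+\tfrac{2m}3$ and $\tfrac{2m}3+\tfrac n2$. The last is always dominated, and the first two are equal only along the line $n=\tfrac83m$, reflecting the size coincidences among the six conjugates. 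Off this line one of the first two terms dominates, so $\Phi_{m,n}=0$ would force the corresponding imaginary part $\im(y_1^n)$ or $\im(x_1^m)$ to be exponentially small in $\max(m,n)$. I would exclude this by a lower bound for a linear form in two logarithms applied to $x_1/x_2$ and $y_1/y_2$: these are not roots of unity (the only roots of unity in $L$ are $\pm1$, while $x_1+x_2,y_1+y_2\neq0$), so a Baker--Matveev estimate gives $|\im(y_1^n)|=\tfrac12|y_1|^n\,|(y_1/y_2)^n-1|\gg|y_1|^n\,n^{-\kappa}$ for an effective $\kappa$, and similarly for $x$. Comparing with $|\delta_{m,n}|\ge1$ then bounds $\max(m,n)$ effectively away from $n=\tfrac83m$, leaving a finite region together with a thin strip $|3n-8m|=O(\log m)$ about the line.

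The remaining difficulty, and the step I expect to be the main obstacle, is this resonance strip, where the two dominant terms nearly cancel and neither the crude size comparison nor the integrality of $\delta_{m,n}$ alone suffices. There I would compute the leading surviving contribution to $\Phi_{m,n}$ from the expansions $j(\tau)=q^{-1}+744+\cdots$ of the six conjugates, with a uniform control of the error, and show $|\Phi_{m,n}|>\sqrt{23}$ once $m$ exceeds an explicit bound; the linear-forms-in-logarithms input is again what keeps the relevant imaginary parts from being too small to beat the subleading terms. Finally, the finitely many surviving pairs $(m,n)$ and the corresponding finite list of candidate coprime triples $(A,B,C)$ would be eliminated by direct evaluation of $D_{m,n}$ in \textsf{PARI}, completing the argument for both exceptional pairs.
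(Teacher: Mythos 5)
Your reduction coincides with the paper's: the Galois-orbit analysis forcing $\Q(x)=\Q(y)$ with real conjugates paired, the equivalence of the theorem with the nonvanishing of the $3\times 3$ determinant $D_{m,n}$, the two-logarithm input for quantities like $(\bar y_1/y_1)^n$, and a terminal \textsf{PARI} check. Your observation that $D_{m,n}\in\Z\sqrt{-23}$, so that nonvanishing is equivalent to $|D_{m,n}|\ge\sqrt{23}$, is a pleasant extra the paper does not need. But there is a genuine gap exactly where you flag it, and it is not a technicality you could expect to push through. On the resonance strip $|3n-8m|=O(\log m)$, writing $2i\im(y_1^n)=y_1^n\bigl(1-(\bar y_1/y_1)^n\bigr)$ and similarly for $x$, your $\Phi_{m,n}$ becomes (up to the subdominant third term) the four-term sum $x_0^m y_1^n-x_0^m\bar y_1^n-y_0^n x_1^m+y_0^n\bar x_1^m$, with all four terms of comparable modulus. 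Ruling out near-total cancellation of such a sum is not a linear form in two logarithms: the ``leading surviving contribution'' you propose to isolate carries the two \emph{independent} oscillating factors $\sin(m\theta_x)$ and $\sin(n\theta_y)$, and any attempt to compare the two dominant terms produces a form involving $\log|1-\alpha^m|$ and $\log|1-\beta^n|$ --- logarithms of \emph{varying} algebraic numbers, outside the scope of Baker--Matveev or Laurent--Mignotte--Nesterenko. Effective lower bounds for four-term $S$-unit-type cancellations are not available, and nothing in your sketch substitutes for them; the integrality threshold $|D_{m,n}|\ge\sqrt{23}$ does not help because the subleading term is itself exponentially large.

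The paper never enters the strip, because of an arithmetic ingredient your proposal lacks: a $p$-adic bound tying $m$ to $\log n$. A \textsf{PARI} search produces a prime $\mathfrak{p}$ of $\OO_L$ above $23$ (resp.\ above $11$ in the $-31$ case) dividing both subdominant conjugates $x_2,x_3$ but not $x_1y_2y_3$. Reducing the determinant \eqref{eq:colin-det} modulo $\mathfrak{p}^m$ yields $1-(y_3/y_2)^n\equiv 0 \bmod \mathfrak{p}^m$, and the elementary lifting-the-exponent statement (Proposition \ref{prop:vp}) converts this into $m\le 2\log n/\log 23+1$ (resp.\ $m\le \log n/\log 11+2$). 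Since on your resonance line $n=\tfrac83 m$ one has $m\asymp n$, the bound $m=O(\log n)$ annihilates the strip for all but very small $m$; the remaining off-strip region is then handled by exactly the two-logarithm estimate you describe (the paper's inequality \eqref{eq:colin} plays the role of your dominant-term comparison), giving $n\le 2092$ and then a short finite verification. If you graft this valuation step onto your scheme, your archimedean machinery goes through; without it, the strip is a real obstruction, not a deferred computation.
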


Consequently, Theorem \ref{th:main} together with \cite[Theorem 1.5]{Ri17} completely solve the above equation for distinct singular moduli, and we deduce the following Theorem.

\begin{theorem}
Let $x=j(\tau),y=j(\tau')$ be two distinct singular moduli of respective discriminants $\Delta$ and $\Delta'$, and $m,n$ two positive integers. Assume that $Ax^m+By^n=C$, for some $A,B,C\in\mathbb{Q}^{\times}$. Then $x$ and $y$ generate the same number field over $\mathbb{Q}$ of degree at most $2$.
\end{theorem}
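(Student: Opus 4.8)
The plan is to obtain this statement with no new analytic input, by a short logical reduction that combines Theorem~\ref{th:main} with Riffaut's earlier result \cite[Theorem 1.5]{Ri17}. The entire difficulty of the problem is already absorbed into Theorem~\ref{th:main}, so the present statement is, structurally, a formal corollary; the only thing to do carefully is to match the hypotheses.

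First I would translate the equation into the linear-dependence language of Theorem~\ref{th:main}. Assume $x\neq y$ and that $Ax^m+By^n=C$ holds for some $A,B,C\in\Q^{\times}$ and positive integers $m,n$. Rewriting this as
\[
A\,x^m + B\,y^n + (-C)\cdot 1 = 0,
\]
and observing that $A,B,-C$ are all nonzero, I note that this is precisely a nontrivial $\Q$-linear dependence relation among the three numbers $1,x^m,y^n$. Thus, under the hypotheses of the statement, the numbers $1,x^m,y^n$ are linearly \emph{dependent} over $\Q$.

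Next I would invoke \cite[Theorem 1.5]{Ri17}. Since $x\neq y$ satisfy an equation of the required shape, that theorem guarantees that $x$ and $y$ generate the same number field over $\Q$, of degree $h\leq 3$, and that the extremal case $h=3$ can occur only when $\{\Delta,\Delta'\}=\{-4\cdot 23,-23\}$ or $\{\Delta,\Delta'\}=\{-4\cdot 31,-31\}$. If $h\leq 2$ there is nothing further to prove, so the only case to eliminate is $h=3$.

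Finally I would close that case with Theorem~\ref{th:main}. Suppose, toward a contradiction, that $h=3$. Then $\{\Delta,\Delta'\}$ is one of the two exceptional pairs above, and Theorem~\ref{th:main} asserts that for exactly these discriminants the numbers $1,x^m,y^n$ are linearly \emph{independent} over $\Q$ for all positive integers $m,n$. This directly contradicts the nontrivial relation produced in the first step. Hence $h=3$ is impossible, and $x,y$ generate the same number field over $\Q$ of degree at most $2$, as claimed. I do not expect any genuine obstacle in this argument: the substantive content lies entirely in Theorem~\ref{th:main}, and the present statement follows formally once that theorem and \cite[Theorem 1.5]{Ri17} are in hand.
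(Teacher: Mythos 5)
Your proposal is correct and coincides with the paper's own derivation: the authors state exactly this deduction, combining \cite[Theorem 1.5]{Ri17} (which gives $\Q(x)=\Q(y)$ of degree $h\leq 3$, with $h=3$ only for the two exceptional discriminant pairs) with Theorem~\ref{th:main} (which rules out those pairs via the linear independence of $1,x^m,y^n$). Your careful matching of hypotheses, rewriting $Ax^m+By^n=C$ with $A,B,-C\in\Q^{\times}$ as a nontrivial dependence relation, is precisely the intended formal step.
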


As previously, this result is now best possible for distinct singular moduli, since if $h\leq 2$, then for all exponents $m,n$, one easily finds $A,B,C\in\mathbb{Q}^{\times}$ such that $Ax^m+By^n=C$. However, our current methods are still not able to handle the case $x=y$, which is equivalent to the following question: can a singular modulus of degree $3$ or higher be a root of a trinomial with rational coefficients? Much about trinomials is known, but this knowledge is still insufficient to rule out such a possibility. Otherwise, the assumption $C\neq 0$ is seemingly restrictive, but in fact, the case $C=0$ is contained in \cite[Theorem 1.6]{Ri17}.

Our calculations were performed using the \textsf{PARI/GP} package \cite{Pari}. The sources are available from the second author.

\section{Preliminaries}

Below we briefly recall some basic facts about the conjugates of a singular modulus and the height of an algebraic number.

\paragraph{Fields generated by a power of a singular modulus}~

\medskip

Let $j(\tau)$ be a singular modulus of discriminant $\Delta$. It is well-known that the conjugates of $j(\tau)$ over $\mathbb{Q}$ can be described explicitly; see, for instance, \cite[Subsection 2.2]{Ri17}. In particular, $j(\tau)$ admits one real conjugate which has the property that it is much larger in absolute value than all its other conjugates, called the \emph{dominant $j$-value} of discriminant $\Delta$. As a useful consequence, a singular modulus and any of its powers generate the same field over $\mathbb{Q}$; see \cite[Lemma 2.6]{Ri17}, a statement which we reproduce below.

\begin{lemma}
\label{lemma:power-j}
Let $x$ be a singular modulus of discriminant $\Delta$, with $|\Delta|\geq 11$, and $n$ a non-zero integer. Then $\mathbb{Q}(x)=\mathbb{Q}(x^n)$.
\end{lemma}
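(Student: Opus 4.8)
The plan is to prove the two inclusions separately, the containment $\mathbb{Q}(x^n)\subseteq\mathbb{Q}(x)$ being immediate since $x^n$ is a polynomial in $x$. It also suffices to treat $n>0$: when $|\Delta|\geq 11$ one has $x\neq 0$, and $\mathbb{Q}(x^{-|n|})=\mathbb{Q}(x^{|n|})$ because the two generators are inverses of one another. So the entire content is the reverse inclusion $x\in\mathbb{Q}(x^n)$ for positive $n$, equivalently the assertion $[\mathbb{Q}(x):\mathbb{Q}(x^n)]=1$.

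First I would pass from $x$ to the dominant conjugate in order to bring the size information into play. Let $\alpha$ be the dominant $j$-value of discriminant $\Delta$; it is one of the $\mathbb{Q}$-conjugates of $x$, so there is an automorphism $\sigma$ of the Galois closure with $\sigma(x)=\alpha$, whence $\sigma(x^n)=\alpha^n$. Conjugate algebraic numbers have equal degree, so $[\mathbb{Q}(x):\mathbb{Q}]=[\mathbb{Q}(\alpha):\mathbb{Q}]$ and $[\mathbb{Q}(x^n):\mathbb{Q}]=[\mathbb{Q}(\alpha^n):\mathbb{Q}]$; dividing via the tower law gives $[\mathbb{Q}(x):\mathbb{Q}(x^n)]=[\mathbb{Q}(\alpha):\mathbb{Q}(\alpha^n)]$. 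Thus it is enough to show $\alpha\in\mathbb{Q}(\alpha^n)$, and I have gained the extra information that $\alpha$ is \emph{strictly} larger in absolute value than every other conjugate of $x$.

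The key step is then a root-of-unity argument. Since $\alpha$ is a root of $T^n-\alpha^n\in\mathbb{Q}(\alpha^n)[T]$, the minimal polynomial of $\alpha$ over $\mathbb{Q}(\alpha^n)$ divides $T^n-\alpha^n$, so each of its roots has the form $\zeta\alpha$ with $\zeta^n=1$. Every such root is also a $\mathbb{Q}$-conjugate of $\alpha$, and it satisfies $|\zeta\alpha|=|\alpha|$. But the dominance property says $\alpha$ is the \emph{unique} conjugate of maximal absolute value, so the only possibility is $\zeta\alpha=\alpha$. Hence the minimal polynomial of $\alpha$ over $\mathbb{Q}(\alpha^n)$ has degree $1$, which yields $\alpha\in\mathbb{Q}(\alpha^n)$ and therefore $\mathbb{Q}(x)=\mathbb{Q}(x^n)$.

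I expect the only genuine obstacle to be the strict dominance of $\alpha$: the argument collapses without the fact that exactly one conjugate of $x$ beats all the others in absolute value, and this is precisely where the hypothesis $|\Delta|\geq 11$ and the explicit description of the conjugates recalled in the cited subsection enter. Granting that input, which the preceding discussion supplies, the remainder is the elementary Galois-theoretic and absolute-value bookkeeping sketched above.
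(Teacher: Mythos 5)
Your proof is correct. The paper does not prove this lemma itself --- it reproduces it from \cite[Lemma 2.6]{Ri17} --- but the mechanism it points to, namely that for $|\Delta|\geq 11$ the dominant $j$-value is strictly larger in absolute value than every other conjugate, is exactly the engine of your argument (whether phrased via roots $\zeta\alpha$ of the minimal polynomial over $\mathbb{Q}(\alpha^n)$, as you do, or via a Galois automorphism fixing $x^n$ but moving $x$, as in the cited source), so your route is essentially the intended one.
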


\paragraph{The height of a non-zero algebraic number}~

\medskip

\sloppypar{Let $\alpha$ be a non-zero algebraic number of degree $d$ over $\mathbb{Q}$, and $\alpha_1=\alpha,\alpha_2,\dots,\alpha_d$ all its conjugates in $\bar{\mathbb{Q}}$. The logarithmic height of $\alpha$, denoted by $\mathrm{h}(\alpha)$, is defined to be}
\[\mathrm{h}(\alpha)=\frac{1}{d}\left(\log|a|+\sum_{k=1}^d\log\max\{1,|\alpha_k|\}\right),\]
where $a$ is the leading coefficient of the minimal polynomial of $\alpha$ in $\mathbb{Z}$. In particular, $\log|a|=0$ when $\alpha$ is an algebraic integer.

Here are some useful properties of the logarithmic height.
\begin{itemize}
\item For any non-zero algebraic number $\alpha$ and $\lambda\in\mathbb{Q^*}$, we have $\mathrm{h}(\alpha^{\lambda})=|\lambda|\mathrm{h}(\alpha)$. In particular, $\mathrm{h}(1/\alpha)=\mathrm{h}(\alpha)$. See \cite[Lemma 1.5.18]{Bo06}.
\item For any two non-zero algebraic numbers $\alpha$ and $\beta$, we have $\mathrm{h}(\alpha\beta)\leq\mathrm{h}(\alpha)+\mathrm{h}(\beta)$.
\end{itemize}

\section{Linear forms in two logarithms}

Let $\alpha$ be an algebraic number with $|\alpha|=1$ but not a root of unity and $m$ a positive integer. We are interested in estimating the quantity $\lambda=1-\alpha^n$, which is closely related to a linear form in two logarithms.

Laurent, Mignotte and Nesterenko describe in \cite{La95} a lower bound on the absolute value of a general linear form in two logarithms, see \cite[Théorème 3]{La95}. In our particular case, Mignotte give in \cite{Yu01} a slight sharpening of this bound. The following Theorem is a corollary of \cite[Theorems A.1.2 and A.1.3]{Yu01}.

\begin{theorem}
\label{th:mignotte}
Let $\alpha$ be a complex algebraic number with $|\alpha|=1$, but not a root of unity, and $m$ a positive integer. There exists an effective computable constant $c_1(\alpha)>0$, depending only on the degree $d$ of $\alpha$ over $\mathbb{Q}$ and its logarithmic height $\mathrm{h}(\alpha)$, such that
\[|1-\alpha^m|>0.99\mathrm{e}^{-c_1(\alpha)(\log m)^2}.\]
\end{theorem}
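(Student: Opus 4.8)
The plan is to prove Theorem \ref{th:mignotte} by reducing the quantity $|1-\alpha^m|$ to a linear form in two logarithms and then applying the quantitative lower bounds of Laurent--Mignotte--Nesterenko in the sharpened form recorded in \cite{Yu01}. The starting observation is that since $|\alpha|=1$, we may write $\alpha=\mathrm{e}^{\mathrm{i}\theta}$, so that $\alpha^m=\mathrm{e}^{\mathrm{i}m\theta}$ also lies on the unit circle. First I would relate $|1-\alpha^m|$ to the distance from $m\theta$ to the nearest integer multiple of $2\pi$: writing $\log\alpha$ for a suitable branch of the logarithm, the combination $\Lambda=m\log\alpha-k(2\pi\mathrm{i})=m\log\alpha-k\log 1$ (for the nearest integer $k$) is exactly a linear form in the two logarithms $\log\alpha$ and $\log 1=2\pi\mathrm{i}$, and one has the elementary comparison $|1-\alpha^m|\asymp|\Lambda|$ when $\Lambda$ is small, via the inequality $|1-\mathrm{e}^z|\geq c|z|$ valid for $z$ in a bounded region together with the trivial bound $|1-\alpha^m|\geq 0.99$ when $\Lambda$ is not small.

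The core step is then to feed this linear form into \cite[Théorème 3]{La95}, respectively its refinement in \cite[Theorems A.1.2 and A.1.3]{Yu01}. The general theorem produces a lower bound of the shape $\log|\Lambda|\geq -C\cdot D^{?}\cdot A_1 A_2\cdot(\log B)^2$, where $A_1,A_2$ are quantities bounded in terms of the heights and degrees of the two algebraic numbers involved, $D$ is the degree of the field they generate, and $B$ measures the size of the integer coefficients $m$ and $k$. In our situation the second logarithm is $\log 1=2\pi\mathrm{i}$, whose contribution $A_2$ is an absolute constant, while $A_1$ depends only on $\mathrm{h}(\alpha)$ and the degree $d$ of $\alpha$; the coefficient $k$ satisfies $|k|\leq m$ (up to an additive constant), so $B$ can be taken comparable to $m$. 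Collecting the parts of the bound that depend only on $d$ and $\mathrm{h}(\alpha)$ into a single effective constant $c_1(\alpha)$ yields precisely $|\Lambda|>\mathrm{e}^{-c_1(\alpha)(\log m)^2}$, and the comparison of the previous paragraph transfers this to $|1-\alpha^m|$, with the explicit numerical factor $0.99$ absorbing the constant in the comparison inequality. One must also handle the hypothesis that $\alpha$ is not a root of unity, which is exactly what guarantees $\Lambda\neq 0$ so that the lower bound is non-vacuous.

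The main obstacle, and the only genuinely delicate point, is verifying that the hypotheses of the Laurent--Mignotte--Nesterenko theorem are met for the pair $(\alpha,1)$ and extracting the stated clean form of the dependence. Specifically, I expect the careful work to lie in (i) choosing admissible values for the parameters $A_1,A_2$ and $B$ in \cite{Yu01} so that the theorem applies uniformly for all $m$, and (ii) checking that the resulting constant genuinely depends only on $d$ and $\mathrm{h}(\alpha)$ and on no hidden quantity such as a particular choice of branch or conjugate of $\alpha$. Since both cited theorems are stated for linear forms with one of the algebraic numbers possibly being $1$, and the $(\log B)^2$ dependence is exactly the shape produced by these two-logarithm estimates, no new ideas beyond a correct bookkeeping of the constants are needed; the conclusion then follows by packaging everything into $c_1(\alpha)$ and restoring the factor $0.99$.
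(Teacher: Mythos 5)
Your proposal follows essentially the same route as the paper: the authors also apply \cite[Theorems A.1.2 and A.1.3]{Yu01} to the linear form $\Lambda=2i\pi-m\log\alpha$ (principal branch, so the integer coefficient of $2\pi i$ is normalized rather than a free $k$), collect all terms depending only on $d$ and $\mathrm{h}(\alpha)$ into an explicit $c_1(\alpha)$ valid for $m\geq 13$, and recover the factor $0.99$ by the elementary comparison between $|\Lambda|$ and $|1-\alpha^m|$ (via the mean value theorem), with small $m$ handled by direct estimation. Your bookkeeping plan, including the role of the non-root-of-unity hypothesis in ensuring $\Lambda\neq 0$, matches the paper's argument.
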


\begin{proof}
We briefly detail the proof, especially to explain how to compute $c_1(\alpha)$ in terms of $d$ and $\mathrm{h}(\alpha)$.

We apply \cite[Theorems A.1.2 and A.1.3]{Yu01} to the linear form
\[\Lambda=2i\pi-m\log\alpha,\]
where we choose the principal complex logarithm (defined on $\mathbb{C}\setminus\mathbb{R}^-$) for $\log\alpha$. We have
\[\log|\Lambda|>-(9.03\mathcal{H}^2+0.23)(D\mathrm{h}(\alpha)+25.84)-2\mathcal{H}-2\log\mathcal{H}-0.7D+2.07,\]
where $D=d/2$ and $\mathcal{H}=D(\log m-0.96)+4.49\leq c_1'(d)\log m$ for $m\geq 13$, with
\[c_1'(d)=D+\max\left\{0,\frac{4.49-0.96D}{\log 13}\right\}>0.\]
Hence,
\begin{multline*}
\log|\Lambda|>-(\log m)^2\left(9.03c_1'(d)^2(D\mathrm{h}(\alpha)+25.84)+\frac{2c_1'(d)}{\log m}+\frac{2\log\log m}{(\log m)^2}\right.\\
+\left.\frac{0.23(D\mathrm{h}(\alpha)+25.84)+2\log c_1'(d)+0.7D-2.07}{(\log m)^2}\right)>-c_1(\alpha)(\log m)^2,
\end{multline*}
with
\begin{align*}
c_1(\alpha)= &~ 9.03c_1'(d)^2(D\mathrm{h}(\alpha)+25.84)+\frac{2c_1'(d)}{\log 13}+\frac{2\log\log 13}{(\log 13)^2}\\
&+\frac{0.23(D\mathrm{h}(\alpha)+25.84)+2\log c_1'(d)+0.7D-2.07}{(\log 13)^2}.
\end{align*}
It follows that
\[|1-\alpha^m|>\frac{\mathrm{e}^{-c_1(\alpha)(\log m)^2}}{1+\mathrm{e}^{-c_1(\alpha)(\log m)^2}}>0.99\mathrm{e}^{-c_1(\alpha)(\log m)^2},\]
resulting from the mean value theorem. \qedhere
\end{proof}

In practice, if $\alpha$ is explicitly known (as an algebraic number in number field $L$), it is then possible to compute effectively $c_1(\alpha)$ for $m\geq 13$. For $m<13$, one just has to estimate directly $|1-\alpha^m|$.

Another way of estimating $1-\alpha^m$ is to reduce it modulo a prime ideal $\mathfrak{p}$ of $\mathcal{O}_L$. More precisely, we want to evaluate its valuation $v_{\mathfrak{p}}(1-\alpha^m)$ at $\mathfrak{p}$; for an element $z\in L$, we write $v_{\mathfrak{p}}(z)$ instead of $v_{\mathfrak{p}}(z\mathcal{O}_L)$ for more simplicity. This can be obtained as follows.

\begin{proposition}
\label{prop:vp}
Let $\alpha$ be an algebraic integer that is not a root of unity in a number field $L$ of degree $d$, and $m$ a positive integer. Let $\mathfrak{p}$ be a prime ideal of $\mathcal{O}_L$ over a prime number $p$. Assume that $\mathfrak{p}\nmid\alpha$. Denote by $m_0$ the order of $\alpha$ in $\mathcal{O}_L/\mathfrak{p}$, that is the least positive integer such that $1-\alpha^{m_0}=0\bmod\mathfrak{p}$, and $v_0=v_{\mathfrak{p}}(1-\alpha^{m_0})$. Then, assuming $p>d+1$, we have
\[v_{\mathfrak{p}}(1-\alpha^m)=
\begin{cases}
0 & \text{if }m_0\nmid m\\
sv_{\mathfrak{p}}(p)+v_0 & \text{if }m=m_0p^sr,\,\gcd(p,r)=1.
\end{cases}\]
\end{proposition}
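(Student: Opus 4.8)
The plan is to split the argument according to whether $m_0\mid m$, treating the first alternative by a direct residue computation and the second by a lifting-the-exponent argument whose only delicate point is a binomial estimate.

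First I would dispose of the case $m_0\nmid m$. By definition $m_0$ is the multiplicative order of the image of $\alpha$ in the residue field $\mathcal{O}_L/\mathfrak{p}$ (which makes sense because $\mathfrak{p}\nmid\alpha$), so $m_0\nmid m$ forces $\alpha^m\not\equiv 1\pmod{\mathfrak{p}}$, whence $\mathfrak{p}\nmid(1-\alpha^m)$ and $v_{\mathfrak{p}}(1-\alpha^m)=0$. Here the hypothesis that $\alpha$ is not a root of unity is used only to guarantee $1-\alpha^m\neq 0$, so that the valuation is finite.

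For the main case, set $\beta=\alpha^{m_0}$, so that $\beta\equiv 1\pmod{\mathfrak{p}}$ and $v_{\mathfrak{p}}(1-\beta)=v_0\geq 1$, and write $\alpha^m=\beta^{p^s r}$ with $\gcd(p,r)=1$. I would first remove the factor $r$: from the factorization $1-\beta^r=(1-\beta)(1+\beta+\cdots+\beta^{r-1})$ together with $\beta\equiv 1\pmod{\mathfrak{p}}$, the second factor is congruent to $r$ modulo $\mathfrak{p}$, hence a $\mathfrak{p}$-unit because $\gcd(p,r)=1$; therefore $v_{\mathfrak{p}}(1-\beta^r)=v_0$. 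The core is then a single ``multiply the exponent by $p$'' lemma: if $\gamma\equiv 1\pmod{\mathfrak{p}}$ and $t=v_{\mathfrak{p}}(1-\gamma)\geq 1$, then writing $\pi=\gamma-1$ one expands
\[1-\gamma^p=-\sum_{k=1}^{p}\binom{p}{k}\pi^k,\]
the term $k=1$ having valuation $v_{\mathfrak{p}}(p)+t$, the terms $2\leq k\leq p-1$ valuation at least $v_{\mathfrak{p}}(p)+2t$ (since $p\mid\binom{p}{k}$), and the term $k=p$ valuation $pt$. I would then invoke the ultrametric inequality to conclude $v_{\mathfrak{p}}(1-\gamma^p)=v_{\mathfrak{p}}(p)+t$, provided the term $\pi^p$ does not compete with $p\pi$, i.e.\ provided $(p-1)t>v_{\mathfrak{p}}(p)$. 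Iterating this lemma $s$ times starting from $\gamma=\beta^r$ (the hypothesis $t\geq 1$ persisting at each step, since the valuation only grows) yields $v_{\mathfrak{p}}(1-\alpha^m)=sv_{\mathfrak{p}}(p)+v_0$.

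The main obstacle is precisely the inequality $(p-1)t>v_{\mathfrak{p}}(p)$ that makes the binomial estimate go through, and this is exactly where the hypothesis $p>d+1$ enters. Writing $e=v_{\mathfrak{p}}(p)$ for the ramification index of $\mathfrak{p}$ over $p$ and $f$ for the residue degree, one has $e\leq ef\leq\sum_{\mathfrak{q}\mid p}e_{\mathfrak{q}}f_{\mathfrak{q}}=d$; combined with $t\geq 1$ this gives $(p-1)t\geq p-1>d\geq e$ as soon as $p>d+1$, closing the argument. I expect no other step to cause difficulty.
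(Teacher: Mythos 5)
Your proof is correct and follows essentially the same route as the paper's: the same geometric-sum factorization to remove the factor $r$, the same binomial expansion of $1-\gamma^p$ to lift the exponent by $p$ (the paper merely writes it as $(\beta^p-1)/(\beta-1)=\sum_{k=1}^{p-1}\binom{p}{k}\lambda^{k-1}+\lambda^{p-1}$), and the same use of $p>d+1$ via $v_{\mathfrak{p}}(p)\leq d<p-1$ to ensure the term $\pi^p$ cannot compete, followed by iteration/induction on $s$. Your explicit isolation of the condition $(p-1)t>v_{\mathfrak{p}}(p)$ and the verification that it persists under iteration is a slightly more careful packaging of the paper's induction step, but not a different argument.
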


\begin{proof}
If $m_0\nmid m$, it is clear that $1-\alpha^m\not\equiv 0\bmod\mathfrak{p}$; hence, $v_{\mathfrak{p}}(1-\alpha^m)=0$. Otherwise, write $m=m_0p^sr$ with $\gcd(p,r)=1$. We proceed by induction on $s\ge 0$. For $s=0$, factoring $1-\alpha^m$ gives
\[1-\alpha^m=(1-\alpha^{m_0})\left(\sum_{l=0}^{r-1}\alpha^{m_0l}\right).\]
Since $\alpha^{m_0l}\equiv 1\bmod\mathfrak{p}$, for all $l\in\{0,\dots,r-1\}$, we deduce
\[v_{\mathfrak{p}}(1-\alpha^m)=v_{\mathfrak{p}}(1-\alpha^{m_0})+v_{\mathfrak{p}}(r)=v_0.\]
We now let $\beta=\alpha^{rm_0}$ and treat the case $s=1$. Writing $\beta=1+\lambda$, where $\lambda\in \mathfrak{p}$, we have that
\[\frac{\beta^p-1}{\beta-1}=\frac{(1+\lambda)^p-1}{\lambda}=\sum_{k=1}^{p-1} \binom{p}{k}\lambda^{k-1}+\lambda^{p-1}.\]
In the right-hand side, we have that $v_{\mathfrak{p}}(\lambda)\ge 1$, and $v_{\mathfrak{p}}(\lambda^{p-1})\ge (p-1)>d\ge v_{\mathfrak{p}}(p)$, so
\[v_{\mathfrak{p}}\left(\sum_{k=1}^{p-1} \binom{p}{k}\lambda^{k-1}+\lambda^{p-1}\right)=v_{\mathfrak{p}}(p).\]
Hence, for $s=1$, we have
\[v_{\mathfrak{p}}(1-\alpha^m)=v_{\mathfrak{p}}(1-\alpha^{m_0r})+v_{\mathfrak{p}}\left(\frac{\beta^p-1}{\beta-1}\right)=v_0+v_{\mathfrak{p}}(p).\]
The statement now follows by induction on $s$, where the induction step from $s$ to $s+1$ is done as above (by replacing $\alpha$ by $\alpha^{p^s}$).  
\end{proof}

\section{Proof of Theorem \ref{th:main}}

Let $x=j(\tau),y=j(\tau')$ be two singular moduli of respective discriminants $\Delta$ and $\Delta'$, with $\{\Delta,\Delta'\}=\{-4\cdot 23,-23\}$ or $\{\Delta,\Delta'\}=\{-4\cdot 31,-31\}$, such that
\begin{equation}
\label{eq:main}
Ax^m+By^n=C
\end{equation}
for some $A,B,C\in\mathbb{Q}^{\times}$ and $m,n$ positive integers.

Both $x$ and $y$ are of degree $3$ over $\mathbb{Q}$, and admit one real conjugate corresponding to the dominant $j$-value, and two complex conjugates. If $x$ is real, then $y$ is also real. Indeed, if not, then, together with \eqref{eq:main}, we have
\[Ax^m+B\bar{y}^n=C.\]
We obtain that $y^n=\bar{y}^n$, which contradicts Lemma \ref{lemma:power-j}.

The equation \eqref{eq:main} implies that $\mathbb{Q}(x^m)=\mathbb{Q}(y^n)$; hence, $\mathbb{Q}(x)=\mathbb{Q}(y)$ by Lemma \ref{lemma:power-j}. In particular, the Galois orbit of $(x,y)$ over $\mathbb{Q}$ has exactly $3$ elements, and each conjugate of $x$ occurs exactly once as the first coordinate of a point in the orbit, just as each conjugate of $y$ occurs exactly once as the second coordinate.

We denote by $(x_1,y_1),(x_2,y_2),(x_3,y_3)$ the conjugates of $(x,y)$, with $x_1,y_1$ real, and $x_2,x_3$, respectively $y_2,y_3$, are complex conjugates. By \eqref{eq:main} again, the points $(x_i^m,y_i^n)$, $i\in\{1,2,3\}$, are collinear. We can write the relation of collinearity of these points in one of the following two ways:
\begin{gather}
\label{eq:colin-det}
\left|
\begin{matrix}
1 & x_1^m & y_1^n \\ 
1 & x_2^m & y_2^n \\ 
1 & x_3^m & y_3^n
\end{matrix} 
\right|=0;
\\
\label{eq:colin}
\left(\frac{x_1}{x_2}\right)^{-m}\left(\frac{y_1}{y_2}\right)^n=\frac{1-\left(\frac{y_3}{y_2}\right)^n-\left(\frac{x_3}{x_1}\right)^m}{1-\left(\frac{y_3}{y_1}\right)^n-\left(\frac{x_3}{x_2}\right)^m}.
\end{gather}

We focus first on the case $\{\Delta,\Delta'\}=\{-4\cdot 23,-23\}$, and we detail afterwards the  slight differences in the treatment of the case $\{\Delta,\Delta'\}=\{-4\cdot 31,-31\}$. We denote by $L$ the Galois closure of $\mathbb{Q}(x)=\mathbb{Q}(y)$, which by definition contains all $x_i$'s and $y_i$'s.

As announced above, we consider the case $\Delta=4\Delta'=-4\cdot 23$.

Using \textsf{PARI}, one can find a prime ideal $\mathfrak{p}$ of $\mathcal{O}_L$ over $p=23$ such that $\mathfrak{p}|x_2\mathcal{O}_L$, $\mathfrak{p}|x_3\mathcal{O}_L$, but $\mathfrak{p}\nmid x_1y_2y_3\mathcal{O}_L$. Hence, modulo $\mathfrak{p}^m$, the equation \eqref{eq:colin-det} becomes
\[1-\alpha^n=0\bmod\mathfrak{p}^m,\]
with $\alpha=y_3/y_2$. On the one hand, we deduce that $m\leq v_{\mathfrak{p}}(1-\alpha^n)$. On the other hand, we apply Proposition \ref{prop:vp}, checking first that $1-\alpha=0\bmod\mathfrak{p}$, $v_{\mathfrak{p}}(1-\alpha)=1$, $v_{\mathfrak{p}}(p)=2<6<22=p-1$; writing $m=p^sr$ with $\gcd(p,r)=1$, we get
\[v_{\mathfrak{p}}(1-\alpha^m)=sv_{\mathfrak{p}}(p)+1=2s+1.\]
Consequently,
\begin{equation}
\label{eq:bornmn-padic}
m\leq 2\frac{\log n}{\log 23}+1.
\end{equation}

Next, we want to estimate the expression on the  right--hand side of \eqref{eq:colin} in terms of $m$ and $n$ (in fact, only in terms of $n$ thanks to \eqref{eq:bornmn-padic}), in order to obtain a bound on $n$. The principal difficulty is to find a lower bound of the absolute value of its denominator. Since $y_3/y_1$ is pretty close to $0$, it depends essentially on the quantity $1-\beta^m$ with $\beta=x_3/x_2$. Noticing that $|\beta|=1$ and $\beta$ is not a root of unity, then according to Theorem \ref{th:mignotte}, there exists a constant $c_1(\beta)>0$ such that
\[|1-\beta^m|>0.99\mathrm{e}^{-c_1(\beta)(\log m)^2}.\]
Explicitly, for $m\geq 13$, we can choose $c_1(\beta)=4973.14$. It follows that
\begin{align*}
\left|1-\left(\frac{y_3}{y_1}\right)^n-\left(\frac{x_3}{x_2}\right)^m\right| &> 0.99\mathrm{e}^{-4973.15(\log m)^2}-\left|\frac{y_3}{y_1}\right|^n\\
&> 0.99\mathrm{e}^{-4973.14\left(\log\left(2\frac{\log n}{\log 23}+1\right)\right)^2}-\left|\frac{y_3}{y_1}\right|^n
\end{align*}
(recall the inequality \eqref{eq:bornmn-padic}). By a quick calculation, we observe that the last term of the previous inequality is positive provided that $n>2074$. More specifically, if $n>2075$, then
\[\left|1-\left(\frac{y_3}{y_1}\right)^n-\left(\frac{x_3}{x_2}\right)^m\right|>0.98\mathrm{e}^{-4973.14\left(\log\left(2\frac{\log n}{\log 23}+1\right)\right)^2}.\]
Finally, for $m\geq 13$ and $n>2075$, we have
\[\left|\frac{x_1}{x_2}\right|^{-m}\left|\frac{y_1}{y_2}\right|^n\leq 2.05\mathrm{e}^{4973.14\left(\log\left(2\frac{\log n}{\log 23}+1\right)\right)^2},\]
and
\begin{align*}
-\left(2\frac{\log n}{\log 23}+1\right)\log\left|\frac{x_1}{x_2}\right|+n\log\left|\frac{y_1}{y_2}\right|\leq &~ \log 2.05\\
& +4973.14\left(\log\left(2\frac{\log n}{\log 23}+1\right)\right)^2.
\end{align*}
This last inequality yields $n\leq 2092$, and then \eqref{eq:bornmn-padic} gives $m\le 5$. This is in contradiction with the previous assumptions $m\geq 13$ and $n>2075$. Therefore, either $m<13$ or $n\leq 2075$. In both cases, $m<13$, and for each possible $m$, we can explicitly compute a constant $c_2(m)$ such that
\[\left|\frac{x_1}{x_2}\right|^{-m}\left|\frac{y_1}{y_2}\right|^n\leq c_2(m).\]
This allows to bound $n$. The table below summarizes all constants $c_2(m)$ and all bounds we obtain.
\begin{table}[H]
\caption{Constants $c_2(m)$ and bounds on $n$ for each $m<13$, in the case $\Delta=4\Delta'=-4\cdot 23$}
\[
\begin{array}{c|c|c}
m & c_2(m) & \textrm{Upper bound of }n\\ \hline
1 & 1.15 & 2\\ \hline
2 & 1.21 & 5\\ \hline
3 & 11.97 & 8\\ \hline
4 & 1.10 & 10\\ \hline
5 & 1.28 & 13\\ \hline
6 & 6.00 & 16\\ \hline
7 & 1.07 & 18\\ \hline
8 & 1.38 & 21\\ \hline
9 & 4.02 & 24\\ \hline
10 & 1.04 & 26\\ \hline
11 & 1.50 & 29\\ \hline
12 & 3.04 & 32
\end{array}
\label{table:allmn}
\]
\end{table}
Again,  inequality \eqref{eq:bornmn-padic} eliminates all entries of Table \ref{table:allmn} with $m\geq 3$. Consequently, either $m=1$ and $n\leq 2$, or $m=2$ and $n\leq 5$. For each of these remaining couples $(m,n)$, a direct calculation shows that the determinant in equation \eqref{eq:colin-det} does not vanish.

To finish, we repeat this process for the case $\Delta=4\Delta'=-4\cdot 31$. In this case, one can find a prime ideal $\mathfrak{p}$ of $\mathcal{O}_L$ over $p=11$ such that $\mathfrak{p}|x_2\mathcal{O}_L$, $\mathfrak{p}|x_3\mathcal{O}_L$, but $\mathfrak{p}\nmid x_1y_2y_3\mathcal{O}_L$ as before, and we get
\begin{equation}
\label{eq:bornmn-padic-bis}
m\leq\frac{\log n}{\log 11}+2.
\end{equation}
We obtain as well, for $m\geq 13$ and $n>1440$,
\[\left|\frac{x_1}{x_2}\right|^{-m}\left|\frac{y_1}{y_2}\right|^n\leq 2.05\mathrm{e}^{4820.16\left(\log\left(\frac{\log n}{\log 11}+2\right)\right)^2},\]
then
\[-\left(\frac{\log n}{\log 11}+2\right)\log\left|\frac{x_1}{x_2}\right|+n\log\left|\frac{y_1}{y_2}\right|\leq\log 2.05+4820.16\left(\log\left(\frac{\log n}{\log 11}+2\right)\right)^2,\]
which yields $n\leq 1720$ and $m\leq 5$; again a contradiction. For each possible $m<13$, we compute a constant $c_2(m)$ as defined above, and we deduce a bound on $n$. Here is the table:
\begin{table}[H]
\caption{Constants $c_2(m)$ and bounds on $n$ for each $m<13$, in the case $\Delta=4\Delta'=-4\cdot 31$}
\[
\begin{array}{c|c|c}
m & c_2(m) & \textrm{Upper bound of }n\\ \hline
1 & 1.13 & 3\\ \hline
2 & 1.25 & 6\\ \hline
3 & 6.17 & 10\\ \hline
4 & 1.06 & 13\\ \hline
5 & 1.44 & 16\\ \hline
6 & 3.13 & 19\\ \hline
7 & 1.02 & 22\\ \hline
8 & 1.76 & 26\\ \hline
9 & 2.13 & 29\\ \hline
10 & 1.01 & 32\\ \hline
11 & 2.33 & 36\\ \hline
12 & 1.65 & 39
\end{array}
\]
\label{table:allmn-bis}
\end{table}
Inequality \eqref{eq:bornmn-padic-bis} eliminates all entries of Table \ref{table:allmn-bis} with $m\geq 3$. Consequently, either $m=1$ and $n\leq 3$, or $m=2$ and $n\leq 6$. Each of these remaining possibilities can be excluded by a direct calculation showing that the respective determinant does not vanish.


{\footnotesize

}


\begin{thebibliography}{}

\bibitem{Al15}
\textsc{B. Allombert, Yu. Bilu, A. Pizarro-Madariaga},
CM-Points on Straight Lines, in: C. Pomerance, M. T. Rassias (editors), \emph{Analytic Number Theory In Honor of Helmut Maier's 60th Birthday}, 1--18, Springer, 2015.

\bibitem{An98}
\textsc{Y. André},
Finitudes des couples d'invariants modulaires singuliers sur une courbe algébrique plane non modulaire, \emph{J. Reine Angew. Math.} \textbf{505} (1998), 203--208.

\bibitem{Yu01}
\textsc{Yu. Bilu, G. Hanrot, P. M. Voutier, M. Mignotte,}
Existence of primitive divisors of Lucas and Lehmer numbers,
\textit{J. reine angew. Math.} \textbf{539} (2001), 75-122.

\bibitem{Yu13}
\textsc{Yu. Bilu, D. Masser, U. Zannier},
An effective “Theorem of André” for CM-points on a plane curve, \emph{Math. Proc. Cambridge Philos. Soc.} \textbf{154} (2013), 145--152.

\bibitem{Yu16}
\textsc{Yu. Bilu, F. Luca, A. Pizarro-Madariaga},
Rational Products of Singular Moduli, \emph{Journal of Number Theory} \textbf{158} (2016), 397--410.

\bibitem{Bo06}
\textsc{E. Bombieri, W. Gubler},
Heights in Diophantine Geometry, Cambridge University Press, 2006.

\bibitem{Co89}
\textsc{D. A. Cox},
\emph{Primes of the form $x^2+ny^2$}, Wiley, NY, 1989.

\bibitem{Ku13}
\textsc{L. Kühne},
An effective result of André-Oort type II, \emph{Acta Arith.} \textbf{161} (2013), 1--19.

\bibitem{La95}
\textsc{M. Laurent, M. Mignotte, Y. Nesterenko},
Formes linéaires en deux logarithmes et déterminants d'interpolation, J. Number Th. \textbf{55} (1995), 285-321.

\bibitem{Ri17}
\textsc{A. Riffaut},
Equations with powers of singular moduli, \texttt{arXiv:1710.03547}, 2017.

\bibitem{Pari}
\textsc{The PARI Group},
\textsf{PARI/GP} version 2.7.1 (2014), Bordeaux; available from \url{http://pari.math.u-bordeaux.fr/}.

\end{thebibliography}
\end{document}